\newtheorem{thm}{Theorem}
\newtheorem{cor}[thm]{Corollary}
\newtheorem{lemma}[thm]{Lemma}
\newtheorem{prop}[thm]{Proposition}
\theoremstyle{definition}
\newtheorem{remark}[thm]{Remark}
\newtheorem*{Acknowledgements}{Acknowledgements}
\definecolor{magenta}{rgb}{.5,0,.5} 
\definecolor{dred}{rgb}{.5,0,0} 
\definecolor{green}{rgb}{0,.5,0} 
\definecolor{blue}{rgb}{0,0,0.5} 
\definecolor{black}{rgb}{0,0,0} 
\definecolor{dgreen}{rgb}{0,.3,0} 
\definecolor{vdred}{rgb}{.3,0,0} 
\definecolor{red}{rgb}{1,0,0} 
\definecolor{gray}{rgb}{.5,.5,.5} 
\definecolor{cerulean}{rgb}{0,.48,.65} 
\definecolor{gold}{rgb}{0.80,0.58,0.05}
\def\ni{\noindent}
\def\CAT{\hbox{\rm CAT}}
\def\ms{\medskip}
\def\onto{{\kern3pt\to\kern-8pt\to\kern3pt}}
\def\<{\langle}
\def\>{\rangle}
\def\|{{\ |\ }}
\def\*{^{\star}}
\begin{document}

\title{Cannon--Thurston maps do not always exist}

\author{O.\ Baker  and T.R.\ Riley}

\thanks{The second author gratefully acknowledges partial  support from NSF grant DMS--1101651 and from Simons Foundation Collaboration Grant 208567.}  

\date \today

\begin{abstract}
\ni  We construct a hyperbolic group with a  hyperbolic subgroup for which inclusion does not induce a continuous map of the boundaries.

 \ms

\footnotesize{\ni \textbf{2010 Mathematics Subject
Classification:  20F67}  \\ \ni \emph{Key words and phrases:} hyperbolic group, Cannon--Thurston map}
\end{abstract}

\maketitle

\section{Introduction} \label{intro}
\emph{Hyperbolic groups} are the finitely generated groups whose Cayley graphs display characteristics of negative curvature.  Their systematic study was initiated by Gromov in \cite{Gromov4} and, mimicking the study of Riemannian manifolds, pays particular attention to large--scale and asymptotic geometric features such as boundaries. 

One of the many equivalent definitions of the \emph{Gromov boundary} $\partial G$ of an infinite hyperbolic group $G$ with word metric $d$ is as the set of equivalence classes of sequences $(a_n)$ in $G$ such that  $$(a_m \cdot a_n)_e    \ := \   ( d(a_m,e) + d(a_n,e) - d(a_m,a_n))/2  \ \to \   \infty$$ as $m,n \to \infty$, where two such sequences  $(a_n)$ and $(b_n)$ are equivalent when  $(a_m \cdot b_n)_e \to  \infty$ as $m,n \to \infty$.    It is independent of the choice of finite generating set defining $d$ and of the choice of basepoint.   See \cite{BH1} and  \cite{KB} for surveys.  

When $H$ is an infinite hyperbolic subgroup of $G$, one can seek to induce  a  map  $\partial H \to  \partial G$ from the inclusion map.  In \cite{CTnormal} and \cite{Mitra2} Mitra (or Mj, as he is now known) asks whether this is always well--defined, the concern being that $\partial H$ is defined via the word metric on $H$ and  $\partial G$ via that on $G$, and these may differ.  He cites  Bonahon \cite{Bonahon1} for similar questions and Bonahon \cite{Bonahon2}, Floyd \cite{Floyd} and Minsky \cite{Minsky} for  related  work on Kleinian groups.  The question is also raised by I.~Kapovich \& Benakli   \cite{KB} and appears in the problem lists \cite{Bestvina} and \cite{Kapovich}.   When the map exists, it is known as the Cannon--Thurston map.

The Cannon--Thurston map exists for many families of examples.   
The most straightforward is when $H$ is quasi--convex (that is, undistorted).  
Cannon \& Thurston~\cite{Cannon-Thurston} gave the first distorted example: they showed the map exists for $\pi_1 S \hookrightarrow \pi_1 M$ where $M$  is a  closed hyperbolic 3--manifold fibering over the circle with fiber a hyperbolic surface $S$ (and, strikingly, the  Cannon--Thurston map is  a group--equivariant space--filling Peano curve $S^1  \onto  S^2$).      
Mitra  generalized this widely.
He showed the Cannon--Thurston map exists when $H$ is an infinite normal 
subgroup of a hyperbolic group $G$ \cite{CTnormal}, and he developed a theory of 
ending laminations for this context (inspired by \cite{Cannon-Thurston}) to 
describe it \cite{Mitra3}.
He also showed the Cannon--Thurston map exists when $H$ is one of the 
infinite vertex-- or edge--groups of a  finite graph of groups $G$ in 
which $G$ and all of the vertex-- and edge--groups  are hyperbolic, 
and all the defining monomorphisms from edge--groups to vertex--groups are quasi--isometric embeddings~\cite{CTtrees}.

Recently, Mj established that Cannon--Thurston maps exist for surface
Kleinian groups \cite{Mj2} (answering a question of Cannon \& Thurston
from \cite{Cannon-Thurston} and Question~14 from Thurston's celebrated
1982 Bulletin AMS paper \cite{Thurston}) and then  for arbitrary
Kleinian groups \cite{Mj} (proving a conjecture of McMullen from
\cite{McMullen}).   Mitra's question can be viewed as asking whether
the natural generalization of these results in the setting of
Geometric Group Theory holds.  We answer it negatively: 

\begin{thm} \label{main}
There are positive words $C$, $C_1$, $C_2$ on $c_1, c_2$ and  $D_1, D_2, D_{11}, D_{12}, D_{21}, D_{22}$ on $d_1,  d_2$ so that 
\[G \ = \ \left\langle \    a, \, b, \, c_1, c_2, \, d_1,  d_2  \     \left| \left. \, 
\parbox{72mm}{ 
$\begin{array}{rlrl}
  a^{-1}b^{-1}ab \!\!\!\! &  =   C, &    b^{-1}c_ib\!\!\!\!& = C_i,     \\  
  (ab)^{-1}d_j(ab)\!\!\!\!&=D_j,  &    c_i^{-1}d_jc_i\!\!\!\! & =D_{ij},  \ \     1 \leq  i,j \leq 2   
\end{array}$} \,  \right. \right.  \right\rangle\]
is hyperbolic, the subgroup $$H \ = \  \langle b,d_1, d_2 \rangle$$ is free of rank $3$, and  there is no Cannon--Thurston map $\partial{H}\to\partial{G}$.
\end{thm}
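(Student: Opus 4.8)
The plan is to exploit the evident hierarchical form of the presentation. Reading the relations by which generators are pushed \emph{into} lower strata, $G$ is an iterated HNN extension over the base $F_d=\langle d_1,d_2\rangle$. The relations $c_i^{-1}d_jc_i=D_{ij}$ exhibit $M:=\langle c_1,c_2,d_1,d_2\rangle$ as a multiple HNN extension of $F_d$ with stable letters $c_1,c_2$ and attaching maps $\mu_i\colon F_d\to F_d$, $d_j\mapsto D_{ij}$; the relations $b^{-1}c_ib=C_i$ exhibit $L:=\langle b,c_1,c_2,d_1,d_2\rangle$ as an HNN extension of $M$ with stable letter $b$, edge group $F_c=\langle c_1,c_2\rangle$ and attaching map $\nu\colon c_i\mapsto C_i$; and, rewriting the $a$--relations as $a^{-1}ba=bC^{-1}$ and $a^{-1}d_ja=bD_jb^{-1}$, they exhibit $G$ as an HNN extension of $L$ with stable letter $a$ whose edge group is exactly $H=\langle b,d_1,d_2\rangle$, with attaching map $\psi\colon b\mapsto bC^{-1},\ d_j\mapsto bD_jb^{-1}$. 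I would first choose the positive words so that every attaching map $\mu_i$, $\nu$, $\psi$ is an injective homomorphism onto its image; then all four levels are genuine HNN extensions and each lower group embeds in the next.

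Freeness of $H$ is then read off the middle level. In $L=M\,\ast_b$ the subgroup $F_d\le M$ lies in the vertex group while $b$ is the stable letter whose associated subgroups sit inside $F_c$. Since $M$ is itself an HNN extension of $F_d$ with stable letters $c_i$, the free subgroups $F_c$ and $F_d$ of $M$ meet trivially, so no power of $b$ conjugates a nontrivial element of $F_d$ into an associated subgroup. By Britton's lemma — equivalently, ping--pong on the Bass--Serre tree of $L$, with $F_d$ elliptic and $b$ hyperbolic — every reduced expression in $b^{\pm1}$ and $F_d\smallsetminus\{1\}$ is nontrivial, whence $H=\langle b\rangle\ast F_d\cong F_3$ once $\mu_1,\mu_2$ are injective (so that $F_d$ is free of rank $2$).

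For hyperbolicity I would arrange the positive words so that every attaching map is not merely injective but \emph{expanding and atoroidal} (e.g.\ positive with no periodic conjugacy classes), and then climb the tower with the Bestvina--Feighn combination theorem: at each stage the edge group is free, and one checks it is quasi-isometrically embedded in the vertex group and that the associated annuli \emph{flare}, which the expansion supplies, so that hyperbolicity is inherited from $F_d$ up to $G$. This is where the words must be engineered by hand, and the competing demands — enough expansion for flaring, yet control of the edge groups — make it the more delicate of the two structural steps.

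The heart is the failure of the Cannon--Thurston map, for which I would use the standard criterion (due to Mitra): the map exists iff for every $M>0$ there is $N>0$ such that each $H$--geodesic at $H$--distance $\ge N$ from $e$ is spanned by a $G$--geodesic at $G$--distance $\ge M$ from $e$; equivalently it \emph{fails} iff there exist $x_n,y_n\in H$ with $(x_n\cdot y_n)_e^H\to\infty$ while $(x_n\cdot y_n)_e^G$ stays bounded. The mechanism is the exponential distortion of $F_d$ in $G$: the relations $(ab)^{-1}d_j(ab)=D_j$ and $c_i^{-1}d_jc_i=D_{ij}$ let conjugation by $(ab)^k$ or by $c_i^k$ compress reduced $d$--words of exponential length to $G$--words of length $O(k)$, whereas in the free group $H$ such words keep their full length. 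I would choose $x_n,y_n$ to share a long common $H$--geodesic prefix — forcing $(x_n\cdot y_n)_e^H\to\infty$ — but to continue that prefix by $d$--blocks compressed in two \emph{incompatible} directions, one a conjugate by a power of $ab$ and the other a conjugate by a power of $c_1$, so that in $G$ the geodesics $[e,x_n]$ and $[e,y_n]$ peel apart toward the distinct boundary points of $ab$ and $c_1$ and the connecting $G$--geodesic dips back near $e$. A continuous extension $\partial H\to\partial G$ would send the common $H$--limit of $(x_n)$ and $(y_n)$ to one point of $\partial G$, forcing $(x_n\cdot y_n)_e^G\to\infty$ and contradicting boundedness; hence no Cannon--Thurston map exists. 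I expect the main obstacle to live precisely here: proving the \emph{lower} bound that $(x_n\cdot y_n)_e^G$ really stays bounded — equivalently that the two compression directions remain a definite distance apart in $\partial G$ and that the shared prefix genuinely retracts toward $e$ — requires real control of $G$--geodesics, drawn from the same flaring and distortion estimates that underlie hyperbolicity, rather than the cheap upper bounds that the compressions hand us for free.
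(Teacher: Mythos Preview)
Your structural analysis --- the HNN tower $F_d \le M \le L \le G$ with $H$ appearing as an associated subgroup at the top, and the freeness of $H$ via Britton's lemma once $F_c\cap F_d=\{1\}$ in $M$ --- matches the paper's exactly. The divergence begins at hyperbolicity. Rather than climbing the tower with Bestvina--Feighn, the paper simply takes $C,C_i,D_j,D_{ij}$ to be Rips-style positive words (of the shape $c_1c_2^{k+1}c_1c_2^{k+2}\cdots c_1c_2^{k+r}$) with $r$ large enough that the whole presentation is $C'(1/6)$, whence $G$ is hyperbolic in one stroke. This is not merely a shortcut: it supplies precisely the control of $G$--geodesics that you correctly flag as the main obstacle.

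That control comes through a lemma extracted from Strebel: in a $C'(1/6)$ group, any word no subword of which agrees with more than one sixth of a cyclic conjugate of a relator (``strongly Dehn--reduced'') is the \emph{unique} geodesic for its element. The paper observes that $w_n=b^{-n}a^{-n}d_1a^nb^n$ is strongly Dehn--reduced (its longest overlap with any relator is $a^{-1}d_1a$), so the path it labels, based at $b^n$ and hence passing through $e$, is a $G$--geodesic. An easy induction on the relation $ab=baC$ rewrites $a^nb^n$ as a positive word $u$ in the alphabet $\{ab,c_1,c_2\}$; then $(a^nb^n)^{-1}d_1(a^nb^n)=u^{-1}d_1u$ is, by the relations $(ab)^{-1}d_j(ab)=D_j$ and $c_i^{-1}d_jc_i=D_{ij}$, a positive word in $d_1,d_2$. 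So the endpoints of the path are $b^n$ and $b^n\cdot(\text{positive $d$--word})$, both in $H$, and the $H$--geodesic between them lies at $H$--distance exactly $n$ from $e$ while the $G$--geodesic $\gamma_n$ hits $e$. Mitra's criterion finishes immediately. Your ``two incompatible compression directions'' picture is not the mechanism here --- only one conjugator $a^nb^n$ is in play --- and, as you anticipate, without a geodesic-recognition tool of this kind it is unclear how your plan would certify the bounded $G$--Gromov product; the $C'(1/6)$ choice of words is exactly the missing ingredient that turns that step from an estimate into a computation.
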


 A \emph{positive} word is one in which inverses of generators never appear. 

At the expense of complicating the construction, $G$ can be made \CAT$(-1)$, as we will outline in Remark~\ref{CAT(-1)}.   

That  $H$ is free is not pertinent to the nonexistence of the Cannon--Thurston map.   Theorem~\ref{main}  is the starting point 
for a proof by Matsuda and Oguni~\cite{MO} that for every non--elementary hyperbolic group there is an embedding in some other hyperbolic group for which there is no Cannon--Thurston map. Implications of Theorem~\ref{main}  have also been explored by  Gerasimov and Potyagailo in a recent paper \cite{GP} on convergence actions. 

Given that Cannon--Thurston maps do not always exist, the question arises as to what bearing subgroup distortion has.   Heavy distortion appears to be no obstacle to the map's existence: we showed in \cite{BR1} that Cannon--Thurston maps exist for highly distorted free subgroups of hyperbolic hydra groups;  these examples exhibit the maximum distortion known among  hyperbolic subgroups of hyperbolic groups. 
As for small distortion, if a subgroup of a hyperbolic group is  subexponentially distorted, then the subgroup is quasi--convex by Proposition~2.6 of  \cite{Ksubexp} and so the Cannon--Thurston Map exists.  The natural open question, then, (which Ilya Kapovich asked us)  is whether there is an exponentially distorted hyperbolic subgroup of a hyperbolic group for which the Cannon--Thurston map does not exist.\footnote{An earlier version of this article claimed that the subgroup $G_{bcd}\leq G$ (defined before Lemma~\ref{properties}) is such an example.  Although $\partial G_{bcd}\to\partial G$ is not well-defined, we realized that the distortion is at least doubly exponential, so Kapovich's question remains open.} It will be apparent from our proof of Theorem~\ref{main} that the subgroup $H \leq G$   is at least doubly--exponentially distorted.

\begin{Acknowledgements}  We thank Ilya~Kapovich and Hamish~Short for comments and especially for Lemma~\ref{Strebel}, which replaces a more ad hoc argument in an earlier version of this article.  We are also grateful to an anonymous referee for a careful reading, improvements to our exposition, and insights on the background to Mitra's problem.  
\end{Acknowledgements}

\section{Proof of the theorem} \label{construction}
 
Denote the free group   on a set $S$ by $F(S)$.  If $S=\{s_1,\ldots,s_n\}$,  write $F(S)=F(s_1,\ldots,s_n)$.  If $F$ is a group and $X\subseteq F$ a subset so that the natural map $F(X)\to F$ is an isomorphism, then $X$ is called
a \emph{free basis} for $F$ and $F$ is said to be a \emph{free group of rank} $\textrm{card}(X)$.

We begin by showing that when  $C$, $C_i$, $D_j$ and $D_{ij}$ are chosen suitably, the group $G$ of Theorem~\ref{main} is hyperbolic. 

A finite presentation for a group satisfies the $C'(\lambda)$ small--cancellation condition  when,  after cyclically reducing all the defining relations,   the set  $S$ of all their cyclic permutations and those of their inverses,  has the property that  every common prefix between two distinct $r_1, r_2 \in S$ has length less than $\lambda$ times the lengths of each of $r_1$ and $r_2$ \cite[page~240]{LS1}. 

Following Rips~\cite{Rips}, we take
\begin{align*}
C& \ = \ c_1c_2c_1c_2^2c_1c_2^3\cdots c_1c_2^r,\\
C_i& \ = \ c_1c_2^{ri+1}c_1c_2^{ri+2}c_1c_2^{ri+3}\cdots c_1c_2^{ri+r},\\
D_j& \ = \ d_1d_2^{rj+1}d_1d_2^{rj+2}d_1d_2^{rj+3}\cdots d_1d_2^{rj+r},\\
D_{ij}& \ =  \ d_1d_2^{r(il+j)+1}d_1d_2^{r(il+j)+2}d_1d_2^{r(il+j)+3}\cdots d_1d_2^{r(il+j)+r},
\end{align*}
where $r$ is sufficiently large that the presentation for $G$ of Theorem~\ref{main} satisfies the $C' (1/6)$ condition, and so $G$ is hyperbolic.    
(All $C'(1/6)$ groups admit linear isoperimetric functions and so are hyperbolic \cite{Gersten9}.) 

Next we analyze the construction of $G$ to show (via Lemmas~\ref{Britton} and \ref{properties}  (iv), (v)) that  $H$ is free of rank $3$ for such $C$, $C_i$, $D_j$ and $D_{ij}$.  
Specifically, we will view $G$ as being built from the free group $F(d_1,d_2)$ 
by  \emph{HNN-extensions}.

Suppose $B\leq A$ are groups and $\phi:B\to A$ is any injective 
homomorphism (not necessarily the subgroup inclusion map).    The HNN-extension $A_{\ast \phi}$ of $A$ with \emph{defining homomorphism} $\phi$ and \emph{stable letter} $t$    is 
the group presented by
$$A_{\ast \phi} \ := \ \langle \, A,t \mid t^{-1}bt=\phi(b) \textup{ for all } b\in B \, \rangle,$$
where $t$ is a new generator.  (We may instead present  $A_{\ast\phi}$ by only including the relations $t^{-1}bt=\phi(b)$ for $b$ in some particular generating set for $B$.) The groups $B$ and $\phi(B)$ are called $\emph{associated subgroups}$ of the HNN-extension. 

\begin{lemma}[Britton's Lemma; e.g.\  \cite{BH1, LS1, Stillwell}] \label{Britton}
Suppose   a non-empty word $w$ on the alphabet $\{A\smallsetminus\{e\}\}\sqcup\{t,t^{-1}\}$  contains no two consecutive letters from $A\smallsetminus \{e\}$ and no subword $tt^{-1}$ or $t^{-1}t$.  Then $w\neq1$ in $A_{\ast\phi}$ unless $w$ contains a subword $t^{-1}bt$ where $b\in B$ or $tct^{-1}$ where $c\in\phi(B)$.

In particular, the natural map $A\to A_{\ast \phi}$ is injective, so $A$ can be regarded as a subgroup of $A_{\ast \phi}$ (hence ``extension''),
and $t$ generates an infinite cyclic subgroup of $A_{\ast\phi}$ trivially intersecting $A$.
  \end{lemma}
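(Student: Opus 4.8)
The statement is the classical Britton's Lemma, so my plan is to prove it by the van der Waerden permutation argument, which is self--contained and avoids presupposing any structure theory (one could instead realise $A_{\ast\phi}$ as acting on its Bass--Serre tree, but that route risks circularity here). First I would fix a set $T$ of representatives for the right cosets of $B$ in $A$ and a set $T'$ of representatives for the right cosets of $\phi(B)$ in $A$, in each case letting $e$ represent the subgroup itself. Using these I would define the set $N$ of \emph{normal forms}: the finite sequences $(a_0, t^{\epsilon_1}, a_1, \ldots, t^{\epsilon_n}, a_n)$ with $n \geq 0$, $a_0 \in A$ arbitrary, each $\epsilon_i \in \{\pm 1\}$, and, for $i \geq 1$, $a_i \in T'$ when $\epsilon_i = +1$ and $a_i \in T$ when $\epsilon_i = -1$, subject to the no--backtracking rule that forbids $a_i = e$ whenever $\epsilon_i$ and $\epsilon_{i+1}$ have opposite signs. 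These are exactly the sequences that admit no pinch.

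Next I would make $A_{\ast\phi}$ act on $N$. The element $a \in A$ should act by left multiplication on the initial entry, $(a_0, \ldots) \mapsto (a a_0, \ldots)$, which needs no renormalisation. The stable letter $t$ should act by prepending a $t$ and renormalising: writing $a_0 = \phi(b) c$ with $b \in B$ and $c \in T'$, one has $t a_0 = (t\phi(b)t^{-1}) \, t c = b \, t c$, so $t$ either opens a new syllable, giving $(b, t^{+1}, c, \ldots)$, or, when $c = e$ and the following exponent is $-1$, cancels the leading $t^{-1}$; the action of $t^{-1}$ is the mirror image, decomposing relative to $B$. I expect the main obstacle to be precisely this step: checking that the permutations assigned to $t$ and $t^{-1}$ are mutually inverse bijections of $N$, and that the defining relations are respected, i.e.\ that composing the permutations assigned to $t^{-1}$, $b$, $t$ yields the one assigned to $\phi(b)$ for every $b \in B$. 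This is a finite but fussy case analysis, organised by the signs of the exponents and by whether the relevant coset representatives are trivial. Once it is complete, the universal property of the presentation (von Dyck's theorem) furnishes a homomorphism $\rho\colon A_{\ast\phi} \to \mathrm{Sym}(N)$.

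Finally I would evaluate $\rho$ on the base point $\star = (e) \in N$. A word $w$ satisfying the hypotheses of the lemma and containing no pinch is precisely a reduced sequence $a_0 t^{\epsilon_1} a_1 \cdots t^{\epsilon_n} a_n$, and the key point is that the leading $A$--entry of each intermediate normal form always lands in the associated subgroup matching the leading $t$--sign, so a short induction shows that each prepended $t^{\pm 1}$ lengthens the current normal form rather than cancelling exactly because $w$ has no pinch. Hence $\rho(w)(\star)$ is a normal form with $n \geq 1$ syllables, so $\rho(w)(\star) \neq \star$, giving $\rho(w) \neq \mathrm{id}$ and $w \neq 1$ in $A_{\ast\phi}$, as claimed. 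The ``in particular'' assertions then follow by applying this to short words: a single letter $a \in A \smallsetminus \{e\}$ is pinch--free, yielding injectivity of $A \to A_{\ast\phi}$; the words $t^n$ with $n \neq 0$ are pinch--free, so $t$ has infinite order; and if $t^n = a \in A$ with $n \neq 0$ then $t^{n} a^{-1}$ would be a nontrivial pinch--free word equal to $1$, a contradiction, so $\langle t \rangle \cap A = \{e\}$.
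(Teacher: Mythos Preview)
Your proof is correct: this is the classical Artin--van der Waerden normal form argument, and your inductive claim about the leading $A$--entry lying in $B$ or $\phi(B)$ according to the leading $t$--sign is exactly what makes the no--pinch hypothesis block cancellation at each step. The only place to be careful is the case analysis verifying that the permutations assigned to $t$ and $t^{-1}$ are mutual inverses, but you flagged that yourself.

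The paper, however, does not prove this lemma at all: it is stated with citations to standard references (Bridson--Haefliger, Lyndon--Schupp, Stillwell) and used as a black box. So there is no approach to compare against; you have simply supplied a self--contained proof where the paper defers to the literature. If you want to match the paper's treatment, a one--line citation suffices; if you want to include a proof, yours is the standard one and is fine.
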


 We will need to recognize when a map between free groups is injective in order to show that it gives rise to an HNN-extension.  To this end, we will want to be able to recognize free bases.
Nielsen showed that a set of words represents a free basis for a subgroup of $F(X)$ when certain small-cancellation conditions are satisfied.
\begin{prop}[Nielsen, see {\cite[pages 6--7]{LS1}}] \label{Nielsen}
A set $U$ of words on an alphabet $X$ represents a free basis for a subgroup of $F(X)$ if for every $v_1,v_2,v_3 \in U^{\pm 1}$,
\begin{itemize}
\item[N0.] $v_1\neq e $,
\item[N1.] $v_1v_2\neq e\implies |v_1v_2| \geq |v_1|,|v_2|$,
\item[N2.] $v_1v_2\neq e$ and $v_2v_3 \neq e$ $\implies|v_1v_2v_3|>|v_1|-|v_2|+|v_3|$.
\end{itemize}
\end{prop}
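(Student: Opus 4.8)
The plan is to deduce the proposition from the classical Nielsen cancellation argument. It suffices to show that the natural map $F(U)\to F(X)$, sending each formal generator to the corresponding word $u\in U$, is injective; for then its image is free with basis the image of $U$, which is exactly the assertion that $U$ represents a free basis for $\langle U\rangle$. Injectivity in turn amounts to showing that every \emph{$U$-reduced} product $w=y_1\cdots y_n$ with $n\ge 1$, each $y_i\in U^{\pm 1}$, and $y_iy_{i+1}\ne e$ for all $i$, is nontrivial in $F(X)$. The case $n=1$ is immediate from N0. For the general case I would track how much of each factor survives free reduction, and show that a nonempty ``middle'' of every factor persists.

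First I would extract the consequence of N1. If the junction $y_iy_{i+1}$ is nontrivial and $c_i$ letters cancel there, then $|y_iy_{i+1}|=|y_i|+|y_{i+1}|-2c_i$, so N1 forces $c_i\le\tfrac12|y_i|$ and $c_i\le\tfrac12|y_{i+1}|$: each junction consumes at most half of each adjacent factor. For the two end factors $y_1$ and $y_n$, which meet only one junction, this already leaves a nonempty uncancelled middle. For an interior factor $y_i$ I would apply N2 to the triple $(y_{i-1},y_i,y_{i+1})$. Writing $a=c_{i-1}$ and $b=c_i$ for the cancellations at its two junctions, N1 gives $a+b\le|y_i|$; and if $a+b=|y_i|$ then $y_i$ cancels entirely, so the two surviving pieces of $y_{i-1}$ and $y_{i+1}$ become adjacent and $|y_{i-1}y_iy_{i+1}|\le|y_{i-1}|-|y_i|+|y_{i+1}|$, contradicting the strict inequality N2. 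Hence $a+b<|y_i|$, and every factor, interior or not, retains a nonempty middle $m_i$.

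The final step is to promote this local survival to survival in the fully reduced product. The clean observation is that, since each $m_i$ is nonempty, no factor is ever completely consumed; consequently the non-adjacent factors $y_i$ and $y_{i+2}$ never become adjacent during reduction, so all cancellation stays confined to the original junctions and never reaches any middle. (Uniqueness of the reduced form in the free group makes this independent of the order of reduction.) Therefore the reduced form of $w$ contains $m_1,\dots,m_n$ as disjoint subwords in order, so $|w|\ge\sum_i|m_i|\ge n\ge 1$ and $w\ne e$, which gives injectivity of $F(U)\to F(X)$. I expect the step needing the most care to be precisely this propagation: verifying that the middles guaranteed by N1 and N2 genuinely block every cascade of cancellation, and separately handling the two end factors, where only N1 is available. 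By contrast the junction length estimates and the N2 equality computation are routine.
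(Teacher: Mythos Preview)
Your argument is correct: it is precisely the classical Nielsen cancellation proof from Lyndon--Schupp that the paper cites, and the paper itself does not supply a proof of this proposition beyond that citation. The only cosmetic point worth tightening is the treatment of the end factors: since $c_1$ is an integer and $c_1\le|y_1|/2$, the surviving prefix of $y_1$ has length at least $\lceil|y_1|/2\rceil\ge 1$ by N0, and similarly for $y_n$; you gesture at this but could state it explicitly.
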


\begin{cor} \label{freeness}
$C,C_1,  C_2$  span a rank--$3$ free subgroup of $F(c_1, c_2)$ and
$D_1$, $D_2$, $D_{11}$, $D_{12}$, $D_{21}$, $D_{22}$ span  a rank--$6$ free subgroup of $F(d_1, d_2)$.  
\end{cor}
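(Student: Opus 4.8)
The plan is to apply Nielsen's criterion (Proposition~\ref{Nielsen}) to each of the two sets of words. The verification is essentially identical for the two families, since they have the same combinatorial shape (with $c_1,c_2$ in place of $d_1,d_2$ and a different, but still pairwise disjoint, collection of exponents), so I would concentrate on $U=\{C,C_1,C_2\}\subseteq F(c_1,c_2)$. The features I would extract from the definitions and use throughout are: (i) each word has the \emph{syllable form} $c_1c_2^{e_1}c_1c_2^{e_2}\cdots c_1c_2^{e_m}$, in which single letters $c_1$ separate positive powers of $c_2$; (ii) within each word the exponents strictly increase, $e_1<e_2<\cdots<e_m$; and (iii) the three words use pairwise disjoint sets of exponents (here $\{1,\dots,r\}$, $\{r+1,\dots,2r\}$, $\{2r+1,\dots,3r\}$), so in particular their first exponents are pairwise distinct and their last exponents are pairwise distinct.

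Condition N0 is immediate, as every word is non-empty. For N1 and N2 everything comes down to bounding the free cancellation at the junction of a product $v_1v_2$ with $v_1,v_2\in U^{\pm1}$ and $v_1v_2\neq e$, and I would argue by cases on the signs. If $v_1,v_2$ are both positive then $v_1$ ends in a power of $c_2$ while $v_2$ begins with $c_1$, so there is no cancellation; likewise if both are negative, $v_1$ ends in $c_1^{-1}$ and $v_2$ begins with a power of $c_2^{-1}$, again no cancellation. In the mixed case $v_1=W$, $v_2=V^{-1}$ (positive words $W,V$, with $W=V$ excluded since it gives $v_1v_2=e$), the junction pits the final block $c_2^{\omega_W}$ of $W$ against the initial block $c_2^{-\omega_V}$ of $V^{-1}$; these cancel at most $\min(\omega_W,\omega_V)$ letters, and because $W\neq V$ forces $\omega_W\neq\omega_V$ by (iii), the cancellation cannot propagate past this single block. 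The symmetric case $v_1=W^{-1}$, $v_2=V$ begins with a single $c_1^{-1}c_1$ cancellation and then pits $c_2^{-\gamma_W}$ against $c_2^{\gamma_V}$, where $\gamma_W,\gamma_V$ are the first exponents of $W,V$; again (iii) gives $\gamma_W\neq\gamma_V$ whenever $W\neq V$, so propagation stops at once. In every case the number $k$ of cancelled letters is bounded by the largest exponent appearing, hence $k=O(r)$, whereas each word has length $\Theta(r^2)$.

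With this bound N1 and N2 are routine. Writing $|v_1v_2|=|v_1|+|v_2|-2k$, condition N1 asks $2k\le\min(|v_1|,|v_2|)$, which holds once $r$ is large because $k=O(r)$ and every word has length $\Theta(r^2)$. For N2 I would note that when the cancellations $k_{12}$ and $k_{23}$ at the two ends of the middle factor $v_2$ satisfy $k_{12}+k_{23}<|v_2|$—again automatic since both are $O(r)$ while $|v_2|=\Theta(r^2)$—the two bites do not overlap, so $|v_1v_2v_3|=|v_1|+|v_2|+|v_3|-2k_{12}-2k_{23}$ and N2 reduces to $|v_2|-k_{12}-k_{23}>0$, which is clear. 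I would run the identical argument for the six $d$-words, whose exponent blocks $\{r+1,\dots,2r\},\dots,\{6r+1,\dots,7r\}$ are likewise pairwise disjoint, so that Proposition~\ref{Nielsen} again applies and yields a free basis of rank $6$. The choice of $r$ is harmless: the $C'(1/6)$ condition and the inequalities above all hold for every sufficiently large $r$, so we may simply take $r$ large enough for both.

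The one genuinely delicate point—the main obstacle—is ruling out long cancellation in the mixed-sign products, that is, showing that matching the exponent patterns of two words can never propagate beyond a single $c_2$-block. This is exactly what the disjointness of exponent ranges in (iii) is engineered to guarantee (it is why distinct words share no nontrivial common prefix or suffix), and it is the only place where the specific Rips-style construction of $C,C_i,D_j,D_{ij}$ is used. Everything else is bookkeeping comparing an $O(r)$ cancellation against $\Theta(r^2)$ word lengths.
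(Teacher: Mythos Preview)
Your proof is correct and follows the same route as the paper: apply Nielsen's criterion (Proposition~\ref{Nielsen}) after bounding the free cancellation at each junction. The paper compresses your case analysis into the single remark that conditions N0--N2 hold whenever $U$ satisfies $C'(1/2)$; your $O(r)$ cancellation versus $\Theta(r^2)$ length estimate is exactly what verifies $C'(1/2)$ (indeed $C'(O(1/r))$) for these words, so the two arguments coincide once unpacked.
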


(Indeed, N0--N2  are satisfied if $U$ satisfies the $C' (1/2)$ property.)

Define 
\begin{align*}
G_{cd} \  &:= \  \langle  \, c_1,  c_2, \, d_1,  d_2 \,  \mid \,  c_i^{-1}d_jc_i=D_{ij},  \  \,    1 \leq  i,j \leq 2 \, \rangle, \\
G_{bcd} \  &:= \  \langle \, G_{cd}, b \,  \mid \,   b^{-1} c_i b = C_i, \  \,     1 \leq  i  \leq 2 \,  \rangle.
\end{align*}

\begin{lemma}  \label{properties}
The groups defined above have the following properties.
\begin{enumerate}
\item[(i)]  $F(d_1,d_2)$ is a subgroup of $G_{cd}$.
\item[(ii)]  $F(c_1,c_2)$ is also a subgroup of $G_{cd}$ and $F(c_1,c_2)\cap F(d_1,d_2)=\{1\}$.
\item[(iii)]  $G_{bcd}$ is an HNN-extension of $G_{cd}$ with stable letter $b$ and defining homomorphism $\phi:F(c_1,c_2)\to G_{bcd}$ mapping $c_i \mapsto C_i$. 
\item[(iv)] $H = \langle b,d_1,  d_2 \rangle \leq G_{bcd}$ is free of rank $3$.
\item[(v)] $G$ of Theorem~\ref{main} is an HNN-extension of $G_{bcd}$ with stable letter $a$:
\begin{align*}
G  \  & = \  \langle \, G_{bcd}, a \,  \mid \,  a^{-1}ba=bC^{-1}, \  a^{-1}d_ja=bD_jb^{-1},  \  \,     1 \leq  j  \leq 2 \,  \rangle.
\end{align*}
\end{enumerate}
\end{lemma}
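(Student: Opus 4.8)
The plan is to present each of $G_{cd}$, $G_{bcd}$ and $G$ as an HNN-extension of the preceding group, so that parts (i), (iii) and (v) become structural statements whose embeddings follow from Britton's Lemma (Lemma~\ref{Britton}); the injectivity of the various defining homomorphisms will be read off from Corollary~\ref{freeness}.

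For (i) and (iii), I would build $G_{cd}$ from $F(d_1,d_2)$ by two successive HNN-extensions with stable letters $c_1$ and then $c_2$, the letter $c_i$ carrying the defining homomorphism $d_j\mapsto D_{ij}$. Its image $\langle D_{i1},D_{i2}\rangle$ is free of rank $2$, since $\{D_{i1},D_{i2}\}$ is part of the rank-$6$ free basis of Corollary~\ref{freeness}, so each defining homomorphism is injective and the extensions are genuine; Britton's Lemma then embeds the base at each stage, giving $F(d_1,d_2)\le G_{cd}$. For (iii), the map $\phi\colon F(c_1,c_2)\to G_{cd}$, $c_i\mapsto C_i$, likewise has free image $\langle C_1,C_2\rangle$ by Corollary~\ref{freeness}, hence is injective, and (granting (ii)) this exhibits $G_{bcd}$ as the HNN-extension $(G_{cd})_{\ast\phi}$ with stable letter $b$.

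Part (ii) I would prove by exhibiting the retraction $\rho\colon G_{cd}\to F(c_1,c_2)$ fixing $c_1,c_2$ and killing $d_1,d_2$; it is well defined because every relator $c_i^{-1}d_jc_i D_{ij}^{-1}$ maps to the empty word. As $\rho$ restricts to the identity on the subgroup generated by $c_1,c_2$, that subgroup is free on $\{c_1,c_2\}$, and any $g\in F(c_1,c_2)\cap F(d_1,d_2)$ satisfies $g=\rho(g)=1$, giving the trivial intersection. For (iv) I would run Britton's Lemma on the HNN-extension $G_{bcd}=(G_{cd})_{\ast b}$: a nonempty freely reduced word on $\{b,d_1,d_2\}$ has maximal $d$-syllables that are nontrivial elements of $F(d_1,d_2)$ by (i), and free reduction rules out the subwords $bb^{-1}$ and $b^{-1}b$; a Britton pinch would force such a syllable to lie in $F(c_1,c_2)$ or $\langle C_1,C_2\rangle\subseteq F(c_1,c_2)$, which is impossible by the trivial intersection in (ii). Hence no pinch occurs, the word is nontrivial, and $H=\langle b,d_1,d_2\rangle$ is free of rank $3$.

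For (v), a Tietze computation rewrites $a^{-1}b^{-1}ab=C$ as $a^{-1}ba=bC^{-1}$ and $(ab)^{-1}d_j(ab)=D_j$ as $a^{-1}d_ja=bD_jb^{-1}$; since the remaining relations $b^{-1}c_ib=C_i$ and $c_i^{-1}d_jc_i=D_{ij}$ are precisely those of $G_{bcd}$, the presentation of $G$ is exactly the displayed one, which asserts $a^{-1}\xi a=\psi(\xi)$ for $\xi\in\{b,d_1,d_2\}$, where $\psi(b)=bC^{-1}$ and $\psi(d_j)=bD_jb^{-1}$. As $H$ is free on $\{b,d_1,d_2\}$ by (iv), $\psi$ determines a homomorphism $H\to G_{bcd}$, and $G$ is the HNN-extension $(G_{bcd})_{\ast\psi}$ once $\psi$ is shown to be injective. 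This injectivity---equivalently, that $bC^{-1}$, $bD_1b^{-1}$ and $bD_2b^{-1}$ freely generate---is the main obstacle. I would establish it by a ping-pong argument in Britton normal form for $(G_{cd})_{\ast b}$: in a freely reduced product of these letters and their inverses the only cancellations are a trailing $b^{-1}$ against a leading $b$ at a junction, and each such cancellation fuses two base syllables into a nontrivial element of $G_{cd}$---namely one of $CD_j^{\pm}$, $D_j^{\pm}C^{-1}$ or $D_j^{\pm}D_{j'}^{\delta}$, nontrivial by Corollary~\ref{freeness} and the trivial intersection of (ii)---so no further cancellation cascades. An uncancelled syllable lying in $F(c_1,c_2)$ is always flanked by two occurrences of $b$ of equal sign and so is not a pinch, while every other surviving syllable has nontrivial $d$-content and thus lies outside $F(c_1,c_2)\supseteq\langle C_1,C_2\rangle$; hence the reduced word is pinch-free and nontrivial. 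The delicate point---and the step I expect to require the most care---is this bookkeeping of junctions and, in particular, the verification that when all stable letters cancel the residual element of $G_{cd}$ is still nontrivial; a Bass--Serre tree ping-pong, using that the $bD_jb^{-1}$ are elliptic while $bC^{-1}$ is hyperbolic, would give an alternative route.
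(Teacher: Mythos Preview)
Your proof is correct, and parts (i), (iii) and (iv) follow the paper's argument essentially verbatim. Two parts take genuinely different routes.

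For (ii), your retraction argument is cleaner than the paper's. The paper proves that $\langle c_1,c_2\rangle$ is free and meets $F(d_1,d_2)$ trivially by a direct Britton's-Lemma computation: it shows that any word of the form $(dc_1^{r_0})c_2^{s_1}c_1^{r_1}\cdots c_2^{s_n}c_1^{r_{n+1}}$ with $d\neq 1$ is nontrivial in $G_{cd}$, by checking that the syllables $dc_1^{r_0}$ and $c_1^{r_i}$ miss the associated subgroups of the extension with stable letter $c_2$. Your observation that $c_i\mapsto c_i$, $d_j\mapsto 1$ defines a retraction $G_{cd}\twoheadrightarrow F(c_1,c_2)$ bypasses all of this and gives both conclusions at once.

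For (v), the paper's route is the simpler one. Rather than analyse $bC^{-1},\,bD_1b^{-1},\,bD_2b^{-1}$ directly, the paper first conjugates by $b^{-1}$ to the generating set $b^{-1}C,\,D_1,\,D_2$. This is a real gain: two of the three generators now lie in $F(d_1,d_2)\leq G_{cd}$ with no stable letters at all, so a general reduced word has the transparent shape $(b^{-1}C)^{i_0}W_1(b^{-1}C)^{i_1}\cdots W_r(b^{-1}C)^{i_r}$ with $W_k\in F(D_1,D_2)\smallsetminus\{1\}$. Using $b^{-1}c_ib=C_i$ one has $(b^{-1}C)^{i_k}\in\langle c_1,c_2\rangle\,b^{-i_k}\,\langle c_1,c_2\rangle$, and then the only candidate pinches are $b^{\pm1}Yb^{\mp1}$ with $Y$ a product of elements of $\langle c_1,c_2\rangle$ and some $W_k$; since $W_k\in F(d_1,d_2)\smallsetminus\{1\}$ and $F(c_1,c_2)\cap F(d_1,d_2)=\{1\}$, no pinch occurs. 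Your direct ping-pong on the original generators is correct---as you note, uncancelled $C^{\pm1}$-syllables always sit between equal-sign $b$'s, and every fused syllable carries nontrivial $d$-content of the form $C^{\epsilon}WC^{-\delta}$ with $W\in F(D_1,D_2)\smallsetminus\{1\}$---but the bookkeeping you flag as ``delicate'' is exactly what the conjugation trick eliminates.
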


\begin{proof}
(i) By Corollary~\ref{freeness}, the map $\phi_1:F(d_1,d_2)\to F(d_1,d_2)$ given by $d_j\mapsto D_{1j}$ is injective.
So $\phi_1$ defines an HNN-extension of $F(d_1,d_2)$.  Calling the stable letter $c_1$, this HNN-extension has  presentation 
$$G_{c_1 d} \  := \  \langle  \, c_1, \, d_1,  d_2 \,  \mid \,  c_1^{-1}d_jc_1=D_{1j},  \  \,    1 \leq  j \leq 2 \, \rangle.$$
By Britton's Lemma, $F(d_1,d_2)\leq G_{c_1 d}$.
Similarly, $G_{cd}$ is an HNN-extension of $G_{c_1d}$ with stable letter $c_2$ and defining homomorphism
$\phi_2:F(d_1,d_2)\to G_{c_1d}$ given by $d_j\mapsto D_{2j}$.
Note that $\phi_2$ has image contained in $F(d_1,d_2)\leq G_{c_1d}$. 
Again, $\phi_2$ is injective by Corollary~\ref{freeness}.  So $F(d_1,d_2)\leq G_{c_1 d}\leq G_{cd}$ by Britton's Lemma. 

(ii)
To show that $\langle c_1,c_2\rangle$ is a free subgroup  $F(c_1,c_2)$ of $G_{cd}$ trivially intersecting $F(d_1,d_2)$, we  prove the following claim.  For any $n\geq1$, any non-identity $d\in F(d_1,d_2)$,  and any integers $r_0,\ldots,r_{n+1},s_1,\ldots,s_n$:
$$(dc_1^{r_0})c_2^{s_1}c_1^{r_1}c_2^{s_2}\cdots c_1^{r_n}c_2^{s_n}c_1^{r_{n+1}} \ \ne \  1\textrm{ in }G_{cd}$$
whenever $r_i,s_i\neq0$ for all $1\leq i\leq n$.
This, in turn, follows from Britton's Lemma applied to the extension $G_{cd}$ (which has stable letter $c_2$) once we show $dc_1^{r_0}$ and $c_1^{r_i}$ are not in the associated subgroups of $G_{cd}$.
As these associated subgroups are $F(d_1,d_2)$ and $F(D_{21},D_{22})\leq F(d_1,d_2)$, 
the observation that $\langle c_1\rangle$ is an infinite cyclic subgroup of $G_{c_1 d}$ trivially intersecting $F(d_1,d_2)$ by Britton's Lemma completes the proof.

(iii) By Corollary~\ref{freeness}, $\{C_1,C_2\}$ is a free basis of a subgroup of $F(c_1,c_2)\leq G_{bcd}$.  So the defining homomorphism $\phi:F(c_1,c_2)\to G_{bcd}$, $c_i\mapsto C_i$ is injective.

(iv)  By  Britton's Lemma   applied to the HNN-extension $G_{bcd}$ of $G_{cd}$, any freely reduced word $w$ on $b,d_1, d_2$ representing the identity would contain a subword $ b^{\pm 1} u b^{\mp 1}$ where $u$ is a  nonempty  reduced word on $d_1, d_2$  representing an  element  
 of the associated subgroup $F(c_1,c_2)$ or of the associated subgroup $\phi(F(c_1,c_2))\leq F(c_1,c_2)$.
By (ii), this is impossible.  So $H:=\langle b,d_1,d_2\rangle$ is free of rank 3.

 (v) The given presentation for $G$ arises from that in Theorem~\ref{main} by rewriting the defining relations involving $a$.
We must show that $\langle bC^{-1},bD_1b^{-1},bD_2b^{-1}\rangle\leq G_{bcd}$ is free of rank 3.  It suffices to show the same of the conjugate subgroup $\langle b^{-1}C,D_1,D_2\rangle\leq G_{bcd}$.  We do this  by  proving that if  $i_1,\ldots i_{r-1} \neq 0$ and $W_1,...,W_r$ are nontrivial elements of the rank--2 free group
$F(D_1,D_2) \leq F(d_1,d_2) \leq G_{bcd}$, then
\[
w \ : = \ (b^{-1}C)^{i_0} W_1 (b^{-1}C)^{i_1}\cdots W_r (b^{-1}C)^{i_r} 
\]
does not represent the identity in $G_{bcd}$.  This is achieved by writing $w$ in a way so that Britton's Lemma applies. 
 
The relations $b^{-1}c_ib=C_i$ imply that $(b^{-1}C)^{i_k}\in\langle c_1,c_2\rangle \, b^{-i_k} \, \langle c_1,c_2\rangle$, so:

$$w \ \in \ \langle c_1,c_2\rangle \, b^{-i_0} \, \langle c_1,c_2\rangle \, W_1 \, \langle c_1,c_2\rangle \, b^{-i_1} \, \langle c_1,c_2\rangle  \,    \cdots \, W_r \, \langle c_1,c_2\rangle \, b^{-i_r} \, \langle c_1,c_2\rangle.$$

If $b^{\pm 1}$ does not appear in  $w$, then $r=1,i_0=i_1=0$, and $w=W_1$ does not represent the identity in $G_{bcd}$.   So we may assume $b$ appears. To apply Britton's Lemma, we must show that $w$ has no subword $b^{\pm 1} Y b^{\mp 1}$  where $Y$ is a word on $c_1, c_2, d_1, d_2$ representing an element of $F(c_1,c_2)$.   This is so because $F(c_1,c_2)\cap F(d_1,d_2)=\{1\}$ by (ii) and $W_k \in F(d_1,d_2) \leq G_{bcd}$ does not represent the identity. 
\end{proof}
 
We will use the  following lemma of Mitra to show the absence of a Cannon--Thurston map  $\partial H \to \partial G$.   We  give our own account of this lemma in \cite{BR1}. 

\begin{lemma}[Mitra  \cite{CTnormal, CTtrees}] \label{Mitra's lemma}
Suppose $H$ is a hyperbolic subgroup of a hyperbolic group  $G$ and $X_H$ and $X_G$ are their Cayley graphs with respect to finite generating sets where  that for $H$  is a subset of that for $G$.  (So $X_H$ is a subgraph of $X_G$.)    Let $M(N)$ be the infimal number such that if $\lambda$ is a geodesic in $X_H$ outside the ball of radius $N$ about $e$ in $X_H$, then every geodesic in $X_G$ connecting the end--points of $\lambda$ lies outside  the ball of radius $M(N)$ about $e$ in $X_H$.   The Cannon--Thurston map $\partial H \to \partial G$ exists if and only if $M(N) \to \infty$ as $N \to \infty$.
 \end{lemma}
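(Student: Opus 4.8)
The plan is to recast the existence of the Cannon--Thurston map, which is a statement about continuity of a boundary map, into the sequential language used to define $\partial H$ and $\partial G$ in Section~\ref{intro}, and then to recognise $M(N)$ as the precise geometric encoding of that condition. Since $X_H$ and $X_G$ are locally finite (hence proper) and $H,G$ are hyperbolic, $\partial H$ and $\partial G$ are compact and metrizable, so continuity of an extension of the inclusion is equivalent to sequential continuity. First I would prove that the Cannon--Thurston map exists if and only if
\[
(a_m\cdot a_n)_e^{H}\to\infty \ \Longrightarrow\ (a_m\cdot a_n)_e^{G}\to\infty \qquad (\ast)
\]
for every sequence $(a_n)$ in $H$, where the superscript records whether the Gromov product is computed with $d_H$ or with $d_G$. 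The forward implication is immediate from the definition of convergence to a boundary point. For the converse, $(\ast)$ simultaneously yields convergence of each representative sequence in $\partial G$ and well--definedness: if $(a_n)$ and $(b_n)$ both represent $\xi\in\partial H$, their interleaving $(c_n)$ still satisfies $(c_m\cdot c_n)_e^{H}\to\infty$, so by $(\ast)$ also $(c_m\cdot c_n)_e^{G}\to\infty$, which says exactly that $(a_n)$ and $(b_n)$ have the same image in $\partial G$.

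Next I would install the standard dictionary between Gromov products and distances to geodesics: in a $\delta$--hyperbolic geodesic space one has $(x\cdot y)_e = d(e,[x,y])\pm O(\delta)$. Applying this in $X_H$ and in $X_G$ turns $(\ast)$ into a statement about geodesics. Writing $\lambda_{mn}$ for an $X_H$--geodesic from $a_m$ to $a_n$ and $\gamma_{mn}$ for an $X_G$--geodesic between the same endpoints, the hypothesis $(a_m\cdot a_n)_e^{H}\to\infty$ says the $\lambda_{mn}$ recede from $e$ in $X_H$, while the conclusion $(a_m\cdot a_n)_e^{G}\to\infty$ says the $\gamma_{mn}$ recede from $e$ in $X_G$. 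With this dictionary, $M(N)$ is precisely the least distance from $e$ attained by an $X_G$--geodesic joining the endpoints of an $X_H$--geodesic that lies outside the $N$--ball; because raising $N$ shrinks the family of admissible $\lambda$, the function $M$ is non--decreasing, so ``$M(N)\to\infty$'' is the assertion that this least distance grows without bound.

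The easy direction is then quick. Assuming $M(N)\to\infty$, take any sequence with $(a_m\cdot a_n)_e^{H}\to\infty$ and set $N_{mn}=d_H(e,\lambda_{mn})$, which tends to infinity; then $\gamma_{mn}$ lies outside the $M(N_{mn})$--ball, so $(a_m\cdot a_n)_e^{G}\ge M(N_{mn})-O(\delta)\to\infty$, giving $(\ast)$ and hence the map.

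The hard part will be the converse, and the crux is quantifier management. For the contrapositive, suppose $M$ is bounded, say $M(N)\le R$ for all $N$. Then for each $N$ there are $x_N,y_N\in H$ with $d_H(e,\lambda_N)\ge N$ yet with $\gamma_N$ passing within $R$ of $e$ in $X_G$. These pairs point in uncontrolled directions, so they do not obviously assemble into a single sequence that converges in $\partial H$ while misbehaving in $\partial G$. I would resolve this using properness and compactness: the near points of the $\gamma_N$ all lie in the finite ball of radius $R$ about $e$ in $X_G$, so after passing to a subsequence they coincide at one vertex $q$; and since $(x_N\cdot y_N)_e^{H}\to\infty$, compactness of $\partial H$ forces $x_N$ and $y_N$ to converge to \emph{one and the same} $\xi\in\partial H$ (distinct limits would keep $(x_N\cdot y_N)_e^{H}$ bounded). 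The interleaved sequence $x_1,y_1,x_2,y_2,\dots$ then converges to $\xi$ in $\partial H$, yet its $X_G$--geodesics all pass through $q$, so its $G$--Gromov products stay $\le R+O(\delta)$; this violates $(\ast)$, so no Cannon--Thurston map exists. The remaining points to handle with care are the additive $O(\delta)$ errors throughout, and the interplay of the two metrics: one always has $d_G\le d_H$ on $H$, and it is the $X_G$--metric that must measure the approach of $\gamma_N$ to $e$, so that properness of $X_G$ supplies the finite ball in which $q$ is trapped.
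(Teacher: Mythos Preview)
The paper does not give a proof of this lemma: it is quoted from Mitra \cite{CTnormal, CTtrees}, and the authors merely remark that they ``give [their] own account of this lemma in \cite{BR1}''.  So there is nothing in the present paper to compare against.

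Your argument is the standard one and is essentially correct.  The reformulation $(\ast)$ is exactly the sequential definition of $\partial H$ and $\partial G$ given in Section~\ref{intro}, and the interleaving trick correctly reduces well--definedness to convergence of a single sequence.  The dictionary $(x\cdot y)_e = d(e,[x,y])+O(\delta)$ is a basic fact about $\delta$--hyperbolic geodesic spaces, and with it your ``easy'' direction is immediate.  For the contrapositive, your use of compactness of $\overline{X_H}$ to force (after a subsequence) $x_N\to\xi$ and $y_N\to\xi$ to the \emph{same} point of $\partial H$ is the right move: distinct limits would give $\liminf (x_N\cdot y_N)_e^H \le (\xi_1\cdot\xi_2)_e^H + O(\delta) < \infty$, contradicting $d_H(e,\lambda_N)\ge N\to\infty$.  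The interleaved sequence then witnesses the failure of $(\ast)$, since for the pairs $(x_N,y_N)$ one has $(x_N\cdot y_N)_e^G\le R+O(\delta)$.

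Two small remarks.  First, you do not need the near points of the $\gamma_N$ to land on a common vertex $q$: the uniform bound $d_G(e,\gamma_N)\le R$ already yields $(x_N\cdot y_N)_e^G\le R+O(\delta)$, which is all you use.  Second, note that the endpoints $x_N,y_N$ themselves lie on $\lambda_N$, hence outside the $N$--ball in $X_H$; this is what guarantees that their subsequential limits lie in $\partial H$ rather than in $H$.
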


We will apply this to $G$ and $H$  of Theorem~\ref{main}, using the generating sets  $a, \, b, \, c_1,   c_2, \, d_1,   d_2$ and $b, \,  d_1,  d_2$, respectively.   

The next lemma identifies some geodesics in Cayley graphs of small--cancellation groups.  We learnt it from Ilya~Kapovich and Hamish~Short.  It can be extracted from Strebel's appendix to \cite{GhysdelaHarpe} as we will explain. For a finite  presentation $\langle A \mid R \rangle$, a word $w$ on $A$ is \emph{Dehn--reduced} if every subword $\alpha$ of $w$ that is a prefix of a cyclic conjugate of some $\rho \in R^{\pm 1}$ satisfies $|\alpha| \ \leq \   |\rho|/2$, and is \emph{strongly} Dehn--reduced if $|\alpha| \ \leq \   |\rho|/6$. 

\begin{lemma}  \label{Strebel}  If $\langle A \mid R \rangle$ is a $C'(1/6)$--presentation, then every strongly Dehn--reduced word  on $A$ is geodesic.  (Indeed, it is the unique geodesic word and also the unique Dehn--reduced word for the group element it represents.)  
 \end{lemma}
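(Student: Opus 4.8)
The plan is to unify the three conclusions and attack them with a van Kampen diagram, taking Strebel's classification of $C'(1/6)$ diagrams as the structural engine. It suffices to prove the single statement: if $w$ is strongly Dehn--reduced, $v$ is Dehn--reduced, $w =_G v$, and $w \neq v$ as words, then a contradiction ensues. Indeed, a geodesic word is Dehn--reduced, so if $w$ were not geodesic, passing to a geodesic $v$ with $|v| < |w|$ would contradict this; and a second geodesic word, or a second Dehn--reduced word, for the same element is just such a $v \neq w$, giving the two uniqueness statements. To set up, I would first cancel any common prefix or suffix of $w$ and $v$---a subword of a (strongly) Dehn--reduced word is again (strongly) Dehn--reduced, so the hypotheses persist---and thereby assume $w v^{-1}$ is cyclically reduced. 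Choosing a reduced van Kampen diagram $D$ for $w v^{-1}$ over the presentation, $D$ is a disk whose boundary cycle splits into the $w$--arc and the $v^{-1}$--arc, meeting at two corner vertices.

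If $D$ has no $2$--cell, then $w v^{-1}$ reduces freely to the empty word, forcing $w = v$. If $D$ has exactly one $2$--cell, then $\partial D$ is a cyclic conjugate of a relator $\rho$ with $|\rho| = |w| + |v|$; strong Dehn--reducedness applied to the subword $w$ gives $|w| \leq |\rho|/6$, Dehn--reducedness applied to $v$ gives $|v| \leq |\rho|/2$, and then $|w| + |v| \leq \tfrac{2}{3}|\rho| < |\rho|$, a contradiction. Already here the $1/6$ bound, rather than $1/2$, is what is used.

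When $D$ has at least two $2$--cells I would invoke Strebel's classification of reduced $C'(1/6)$ diagrams with Dehn--reduced (geodesic) boundary arcs \cite{GhysdelaHarpe}: such a $D$ is a \emph{ladder}, a linear chain of $2$--cells each meeting its neighbour in a single piece (a maximal common subpath of two cell boundaries, of length $< \tfrac16$ of each by the $C'(1/6)$ hypothesis). An end cell $\pi$ of the ladder meets the rest of $D$ in exactly one piece, so its exterior boundary $\sigma = \partial\pi \cap \partial D$ is a single subarc of $\partial D$ with $|\sigma| > \tfrac{5}{6}|\partial\pi|$. Writing $\sigma = \sigma_w \sigma_v$ with $\sigma_w$ on the $w$--arc and $\sigma_v$ on the $v^{-1}$--arc, strong Dehn--reducedness of $w$ forces $|\sigma_w| \leq |\partial\pi|/6$, hence $|\sigma_v| > \tfrac{2}{3}|\partial\pi|$; but $\sigma_v$ is a subword of $v^{-1}$ that is a prefix of a cyclic conjugate of the relator $\partial\pi$, so Dehn--reducedness of $v$ gives $|\sigma_v| \leq |\partial\pi|/2$, a contradiction. (Should $\sigma$ lie entirely within one of the two arcs, the contradiction with one of the reducedness hypotheses is immediate.)

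The crux, and the reason ordinary Dehn--reducedness will not do, is the behaviour at the two corners. The classical Greendlinger lemma \cite{LS1} on its own supplies only a boundary cell whose exterior arc exceeds half its perimeter, and such an arc can straddle a corner as $\sigma_w \sigma_v$ with $|\sigma_w| \leq \tfrac16|\partial\pi|$ and $|\sigma_v| \leq \tfrac12|\partial\pi|$; since $\tfrac16 + \tfrac12 > \tfrac12$ these bounds are mutually consistent and yield nothing. What rescues the argument is the sharper ladder structure, in which an end cell has a single piece and hence an exterior arc exceeding $\tfrac56$ of its perimeter; combined with the $1/6$ bound on the $w$--side this defeats the corner. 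Correctly extracting this ladder structure from Strebel's appendix, and checking that Dehn--reducedness of the two arcs is enough to trigger it, is the step I expect to require the most care.
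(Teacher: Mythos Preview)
Your proposal is correct and takes essentially the same approach as the paper: both invoke Strebel's ladder structure for reduced $C'(1/6)$ diagrams over $wv^{-1}$ and derive the contradiction by showing that some $2$--cell has a $v$--arc exceeding half its perimeter. The paper's version is terser---it considers an arbitrary $2$--cell (with up to two neighbour--pieces, each of length $<|\rho|/6$, plus a $u$--arc of length $\le|\rho|/6$) rather than singling out an end cell---but the core idea is identical.
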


\begin{proof}
Suppose $u$ and $v$ are freely reduced words which represent the same group element, and   $u$ is strongly Dehn--reduced and  $v$  is geodesic.  In his proof of Proposition~39(i) in  his appendix to \cite{GhysdelaHarpe},  Strebel explains that there is a van~Kampen diagram $\Delta$ for $uv^{-1}$ whose 2--dimensional portions are \emph{ladder--like} disc--diagrams. (See the figure within Theorem~35.)

Suppose there is a 2--cell in $\Delta$ and that $\rho$ is the defining relation one reads around its boundary.   That 2--cell's boundary cycle is assembled from   four paths: two run along the boundaries of adjacent $2$--cells and have lengths less than $|\rho|/6$ (by the $C'(1/6)$  condition); one runs along $u$ and has length at most  $|\rho|/{6}$ by the strongly Dehn--reduced condition; but then the final path, which runs along $v$,  has length more than  $|\rho|/{2}$ contrary to $v$ being a geodesic word.   (Indeed, if we only required $v$ to be Dehn--reduced we would get the same contradiction.) So $\Delta$ has no $2$--cells and $u=v$ as words.
\end{proof}

\begin{proof}[Proof of Theorem~\ref{main}]
Recall that
\[G \ = \ \left\langle \    a, \, b, \, c_1, c_2, \, d_1,  d_2  \     \left| \left. \, 
\parbox{72mm}{ 
$\begin{array}{rlrl}
  a^{-1}b^{-1}ab \!\!\!\! &  =   C, &    b^{-1}c_ib\!\!\!\!& = C_i,     \\  
  (ab)^{-1}d_j(ab)\!\!\!\!&=D_j,  &    c_i^{-1}d_jc_i\!\!\!\! & =D_{ij},  \ \     1 \leq  i,j \leq 2   
\end{array}$} \,  \right. \right.  \right\rangle\]
where 
\begin{align*}
C& \ = \ c_1c_2c_1c_2^2c_1c_2^3\cdots c_1c_2^r,\\
C_i& \ = \ c_1c_2^{ri+1}c_1c_2^{ri+2}c_1c_2^{ri+3}\cdots c_1c_2^{ri+r},\\
D_j& \ = \ d_1d_2^{rj+1}d_1d_2^{rj+2}d_1d_2^{rj+3}\cdots d_1d_2^{rj+r},\\
D_{ij}& \ =  \ d_1d_2^{r(il+j)+1}d_1d_2^{r(il+j)+2}d_1d_2^{r(il+j)+3}\cdots d_1d_2^{r(il+j)+r}.
\end{align*}
We must show that for sufficiently large $r$, 
$G$ is hyperbolic, $H=\langle b,d_1,d_2\rangle$ is free of rank 3, and there is no Cannon--Thurston map $\partial H\to\partial G$. 

As we observed at the start of this section, $G$ can be made hyperbolic by choosing $r$ large enough to make $G$ satisfy $C'(1/6)$.  Britton's Lemma and Lemma \ref{properties} (iv), (v) together show that $H$ is a rank 3 free subgroup for the same $r$.  We may assume $r>17$.  It remains to show the Cannon--Thurston map does not exist.

 The longest subword of $b^{-n}a^{-n}d_1 a^n b^n$ that is a prefix of a cyclic conjugate of a defining relation or the inverse of a defining relation is $a^{-1}d_1a$. Since $r>17$, the length of $a^{-1}d_1a$ is a small fraction (less than $1/6$) of the length  of the shortest of the relators.  So $b^{-n}a^{-n}d_1 a^n b^n$ is \emph{strongly} Dehn--reduced.  So, by Lemma~\ref{Strebel}, the path $\gamma_n$ it labels, passing  through the identity $e$ as shown in Figure~\ref{Figure}, is geodesic in  the Cayley graph of $G$.

 \begin{figure}[ht]
\psfrag{a}{$a^n$}
\psfrag{b}{$b^n$}
\psfrag{u}{$u$}
\psfrag{w}{$ $}
\psfrag{e}{$e$}
\psfrag{g}{$\gamma_n$}
\psfrag{l}{$\lambda_n$}
\psfrag{d}{$d_1$}
 \centerline{\includegraphics{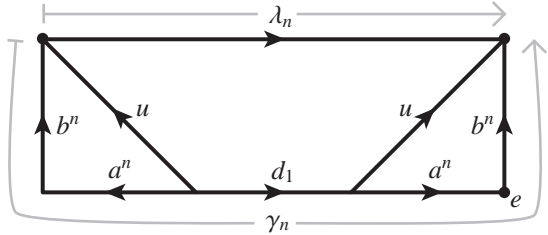}} \caption{Paths in the Cayley graph of $G$ illustrating our proof of Theorem~\ref{main}.} \label{Figure}
\end{figure}

We now wish to express $b^{-n}a^{-n}d_1 a^n b^n$ as a word in $d_1,d_2$.  To begin, we prove 
by induction on $n$ that 
\begin{equation}\label{proof abn}
ab^n \ = \ b^{n-1}ab\phi(C)\cdots\phi^{n-2}(C)\phi^{n-1}(C)
\end{equation}
in $G$, where $\phi:F(c_1,c_2)\to G_{bcd}$, $c_i\mapsto C_i$ is the defining homomorphism of the HNN-extension $G_{bcd}$ with stable letter $b$---see Lemma~\ref{properties}~(iii).
The base case $n=1$ is the equation $ab=ab$.  The induction step follows from the relation $a^{-1}b^{-1}ab=C$
(which rearranges to $ab=baC$):
\begin{eqnarray*}
ab^{n+1} \ = \ (ab)b^n \ = \ (baC)b^n
  &=& b(ab^n)(b^{-n}Cb^n)
\\   &=& b(ab^n)\phi^n(C)
\\ &=& b(b^{n-1}ab\phi(C)\cdots\phi^{n-2}(C))\phi^{n-1}(C))\phi^n(C),\end{eqnarray*} where the last equality uses the induction hypothesis.
Left-multiplying (\ref{proof abn})  by $a^{n-1}$ yields:
\begin{equation}\label{proof anbn}
a^nb^n \ = \ (a^{n-1}b^{n-1})ab\phi(C)\cdots\phi^{n-2}(C))\phi^{n-1}(C)
\end{equation}
Another induction then shows that $a^nb^n$ can be written as a positive word $u$ in the alphabet $\{ab,c_1,c_2\}$.
So $b^{-n} a^{-n} d_1 a^n b^n \ = \ u^{-1} d_1 u$  in $G$, which  equals a positive word on  $d_1,d_2$ since  $(ab)^{-1}d_j(ab) =D_j$ and $c_i^{-1}d_jc_i =D_{ij}$ in $G$.     

So the endpoints of $\gamma_n$ are in $H$, and the geodesic $\lambda_n$ joining them in the Cayley graph of  $H$ (which is a  tree) is labelled by a word on $d_1, d_2$. 
The distance (along the path labelled $b^n$) from $e$ to $\lambda_n$ in $H$ is $n$. 

As the distance from $\gamma_n$ to $e$ in the Cayley graph of $G$ is zero and the distance from $\lambda_n$ to $e$  in the Cayley graph of $H$ is $n$,  
there is no Cannon--Thurston map $\partial H \to \partial G$ by  Lemma~\ref{Mitra's lemma}. 
\end{proof}

\section{Remarks}

\begin{remark} 
The inclusion $H \hookrightarrow G$ factors through $G_{bcd}$, which is also hyperbolic as its presentation is also $C'(1/6)$.  So Theorem \ref{main} implies the absence of at least one Cannon--Thurston map $\partial H\to\partial G_{bcd}$ or $\partial G_{bcd}\to\partial G$.
In fact, more elaborate versions of the argument given above establish that  both fail to exist.
As an HNN--extension is an example of a graph of groups, the latter example also shows that the quasi--isometric embedding hypothesis in Mitra's theorem from \cite{CTtrees} is necessary.
  \end{remark}

\begin{remark} \label{CAT(-1)}
With a similar construction, one can obtain a $\textup{CAT}(-1)$ group $G$ with a free subgroup $H$ with no Cannon-Thurston map.
Wise's modification in \cite{wise1998} of the Rips construction \cite{Rips} is used in \cite{BBD} to construct $\textup{CAT}(-1)$ groups.
Each relator is realized on the boundary of the unions of $n=5$ congruent right--angled regular hyperbolic pentagons, arranged as row houses atop a geodesic segment.
Each edge of the boundary corresponds to a generator.  The vertices of the boundary are either right angles or straight angles,
but the base geodesic gives $n-1$ consecutive straight angles, bounding a segment of length $n-2$.
Wise shows that the Gromov link condition is satisfied when this straight segment is a freely reduced word and when the length--$(2n+4)$ remainder of the boundary is obtained from the \emph{Wise word}:
\[c_1(c_1c_2c_1c_3\cdots c_1c_r)c_2(c_2c_3c_2c_4\cdots c_2c_r)c_3(c_3c_4\cdots c_3c_r)\cdots c_{r-1}(c_{r-1}c_r)c_r\]
by chopping it into consecutive length $2n+4$ segments (one for each defining relator).  
The argument works just as well for any $n$, so we take $n=7$ and fit the $(ab)^{-1}d_j(ab)$, $a^{-1}b^{-1}ab$, $b^{-1}c_ib$, and $c_i^{-1}d_jc_i$ portions of our relators along the straight segment.
We form one Wise word of $c$'s and one of $d$'s.  To get sufficiently many length--$18$ subwords of the Wise words, we increase the number of $c_i$ and $d_j$ in the generating set for $G$.
Then $H=\langle b,d_1,d_2,\ldots\rangle$ is a free subgroup of the $\textup{CAT}(-1)$ group $G$ by the same argument as before.
\end{remark}

\begin{remark} \label{height}
$H$ has \emph{infinite height} in $G$.
That is, for all $n$, there exist 
$g_1, \ldots, g_n \in G$ such that    $\bigcap_{i=1}^n {g_i}^{-1} H g_i$ 
is infinite and $H g_i \neq  H g_j$ for all $i \neq j$.
Specifically, take $g_i =c_1^i$.  
Then, if $\phi_{1} : F(d_1,d_2) \to F(d_1, d_2)$ is the 
map $d_j \mapsto D_{1j}$ for $j=1,2$, then $\phi_{1}^n(F(d_1, d_2))$ is an 
infinite subgroup inside $g_i^{-1} H g_i$ for $1 \leq i \leq n$, and  $H 
g_i \neq  H g_j$ for all $i \neq j$ since $c_1^k \in H$ only for $k=0$ by    
Lemma~\ref{properties}.
Likewise, $G_{bcd}$ has infinite height in $G$: 
instead of taking $g_i=c_1^i$, take $g_i=(ab)^i$ and apply the same 
argument as above.
So our examples do not resolve the question attributed 
to Swarup in \cite{Mitra2}: if $H$ is a finitely presented subgroup of a 
hyperbolic group $G$ and $H$ has \emph{finite height} in $G$, 
is $H$ quasiconvex in $G$?
\end{remark}

\bibliographystyle{plain}
\bibliography{CT_counter}

\end{document}